\newtheorem{thm}{Theorem}
\newtheorem{lem}[thm]{Lemma}
\newtheorem{obs}{Observation}
\def\QED{\ensuremath{{\square}}}
\def\markatright#1{\leavevmode\unskip\nobreak\quad\hspace*{\fill}{#1}}
\newenvironment{proof}
 {\begin{trivlist}\item[\hskip\labelsep{\bf Proof.}]}
 {\markatright{\QED}\end{trivlist}}
\DeclareMathOperator{\TL}{TL}
\begin{document}

\title{Perfect $k$-colored matchings and $(k\!+\!2)$-gonal tilings}

\author{
	Oswin Aichholzer\thanks{Institute of Software Technology,
	Graz University of Technology, Graz, Austria, \newline  {\tt [oaich|bvogt]@ist.tugraz.at} } 
\and
    Lukas Andritsch\thanks{Mathematics and Scientific Computing, University of Graz, Graz, Austria, \newline  {\tt [baurk|lukas.andritsch]@uni-graz.at} }
\and
	Karin Baur\textsuperscript{$\dagger$}
\and
	Birgit Vogtenhuber\textsuperscript{$\ast$}
}

\maketitle

\begin{abstract}
  We derive a simple bijection between geometric plane perfect
  matchings on $2n$ points in convex position and triangulations on
  $n+2$ points in convex position. We then extend this bijection to
  monochromatic plane perfect matchings on periodically $k$-colored
  vertices and $(k+2)$-gonal tilings of convex point sets. These
  structures are related to 
  a generalization of Temperley-Lieb algebras and our bijections
  provide explicit one-to-one relations between matchings and tilings.
  Moreover, for a given element of one class, the corresponding element of the
  other class can be computed in linear time.
\end{abstract}

\section{Introduction}

The Fuss-Catalan numbers $f(k,m)=\tfrac{1}{m}  {km+m \choose m-1}$ are known to count the number of $(k\!+\!2)$-gonal tilings of a convex polygon of size $km+2$ and go back to Fuss-Euler (cf.~\cite{PS2000}). 
Bisch and Jones introduced $k$-colored Fuss-Catalan algebras in~\cite{BJ1997} as a natural generalization of Temperley-Lieb algebras. 
These algebras have bases by certain planar $k$-colored diagrams with $mk$ vertices on top and bottom. 
The dimension of such an algebra is $f(k,m)$, with a basis indexed by these diagrams. 
We call these diagrams plane perfect $k$-colored matchings or just $k$-colored matchings, 
assuming from now on that they are plane and perfect. 
Since the number of $(k\!+\!2)$-gonal tilings coincides with the number of $k$-colored matchings, these sets are in bijection. 
Przytycki and Sikora~\cite{PS2000} prove this through an inductive implicit construction but do not give an explicit bijection between the sets. 

Furthermore, from work of Marsh and Martin~\cite{MM2006}, one can derive an implicit correspondence between triangulations and diagrams for $k\!=\!1$. 
However, to our knowledge, no explicit bijection is known. 

In this paper, we will give bijections between these two sets of plane graphs on sets of points in convex position. 
We first address the case $k=1$ (Section~\ref{sec:uncolored}) and then treat the general case. 
Our main theorems are the explicit bijections between the set of $k$-colored matchings and $(k+2)$-gonal tilings (Theorems~\ref{thm:bij-1} and \ref{thm:bij-k}). 
A key ingredient is the characterization of valid $k$-colored matchings in Theorem~\ref{thm:valid}. 

\section{Algebraic Background}

\subsection{Temperley-Lieb algebras} 

Temperley and Lieb introduced in \cite{TL1971} an algebra arising from a special kind of lattice models, which is a key ingredient in statistical mechanics.   
Given a field $K$ and an element $\alpha \in K$, the Temperley-Lieb algebra $TL_n(\alpha)$ is the algebra with identity $I$ with generators $u_1,\ldots, u_{n-1}$ , $I$, subject to the relations:
\begin{align}
u_i^2&=\alpha u_i, \ \ 1 \leq i \leq n-1\\
u_i u_j &= u_j u_i, \ \ |i-j| > 1, \  1\leq i,j\leq n-1 \\
u_i u_{i+1} u_i &= u_i,\ \ 1 \leq i \leq n-2 \\
u_{i+1}u_i u_{i+1}&=u_{i+1}, \ \ 1 \leq i \leq n-2.
\end{align} 
The basis of the algebra consists of all reduced words, i.e. words which can not be further simplified using the relations. For example, a basis of $\TL_3(\alpha)$ over the field $k$ is $\{I, u_1, u_2, u_1 u_2, u_2 u_1 \}$, independently of the element $\alpha$.
Kauffman introduced a pictorial representation of the Temperley-Lieb algebras in \cite{K1987}. Each generator corresponds to a plane perfect matching with $n$ vertices on the top and bottom of a rectangle labelled $v_1, \ldots, v_n$ and $v_{n+1}, \ldots, v_{2n}$ in clockwise order.
The identity consists of $n$ propagating lines, and the generator $u_i$ consists of $n-2$ propagating lines and two arcs between the pairs $(v_i,v_{i+1})$ and $(v_{2n-i},v_{2n-i+1})$ respectively, see \figurename~\ref{fig:algebraic_background_generators}.
\begin{figure}[htb]
	\centering\includegraphics[scale=1, page=1]{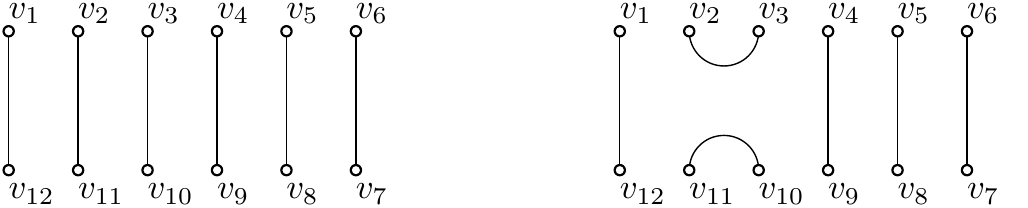}\\
	\caption{The identity $I$ (left) and one of the generators, $u_2$ (right), of $\TL_6(\alpha)$.}
	\label{fig:algebraic_background_generators}
\end{figure}

\noindent Products of generators of the algebra are obtained by concatenation of the corresponding matchings from top to bottom. Any loop arising from this is removed and replaced by a factor $\alpha$, e.g. $u_i u_i= \alpha u_i$, see \figurename~\ref{fig:algebraic_background_loops}.
\begin{figure}[htb]
	\centering\includegraphics[scale=1, page=3]{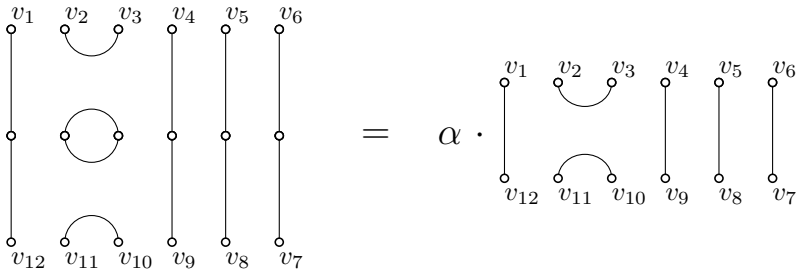}\\
	\caption{Loops are replaced by multiplication with the field element $\alpha$, here: $u_2^2=\alpha u_2$.}
	\label{fig:algebraic_background_loops}
\end{figure}

\noindent   One can check that all the relations $(1)$-$(4)$ are satisfied. Relation $(3)$  is illustrated in \figurename~\ref{fig:algebraic_background_relation}.
\begin{figure}[htb]
	\centering\includegraphics[scale=1, page=2]{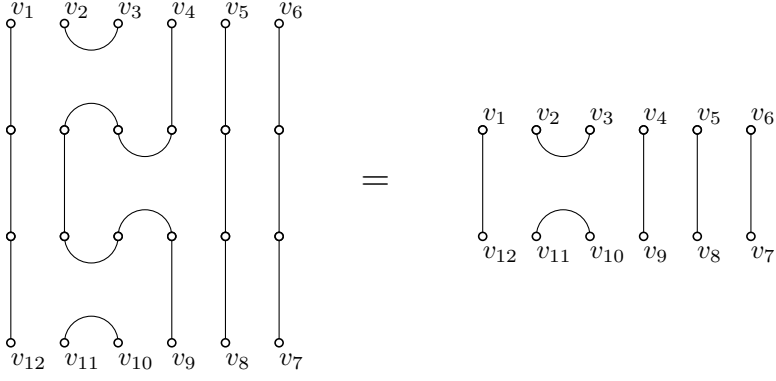}\\
	\caption{The multiplication of generators ($u_2 u_3 u_2$) is shown on the left. The leftmost element of a multiplication always corresponds to the pictogram on the top.}
	\label{fig:algebraic_background_relation}
\end{figure}

\noindent It is a well known result that the dimension of $\TL_n(\alpha)$ is equal to $C_n=\tfrac{1}{n+1}\binom{2n}{n}$, the $n$-th Catalan number (see \cite{BJ1997} for an example). 
We are only interested in the diagrams and will from now on fix $\alpha=1$.

\subsection{Fuss-Catalan algebras}

In \cite{BJ1997}, Bisch and Jones introduced a natural generalization of the Temperley-Lieb algebras, the so called $k$-colored Fuss-Catalan algebras.
These algebras, which we denote by $\TL_{mk,k}(\alpha_1, \ldots, \alpha_k)$, can be defined using the same pictorial representation, now with $mk$ vertices on the top and bottom. However, the diagrams giving the basis must satisfy a further constraint. 
The vertices are colored clockwise starting at the top left vertex, with $k$ colors $c_1, \ldots, c_k$ as follows: $c_1, \ldots, c_{k-1}, c_k, c_k, c_{k-1}, \ldots, c_{2}, c_1, c_{1}, c_2, \ldots, c_k$ and so on. 
Note that the vertices $v_1$ and $v_{2n}$ are always colored with $c_1$ and that the vertices $v_n$ and $v_{n+1}$ have the same color $c_1$ or $c_k$, depending on the parity of $m$. 
In the diagrams, only monochromatic matchings, i.e. matchings where only vertices of the same color are linked, are allowed. 
The identity is again given by straight lines. The generators consist of straight lines and nested sets of arcs as follows: $u_i^{(l)}, 1\leq i \leq m, 1\leq l \leq k$, consists of $l$ nested arcs, where the innermost arc connects vertices $v_{ki}$ and $v_{ki+1}$ and has color $c_{1}$ for $i$ even and $c_{k}$ for $i$ odd, respectively, all other lines are straight. 
See \figurename~\ref{fig:algebraic_background_color} for and illustration of some generators in the $3$-colored case.
\begin{figure}[htb]
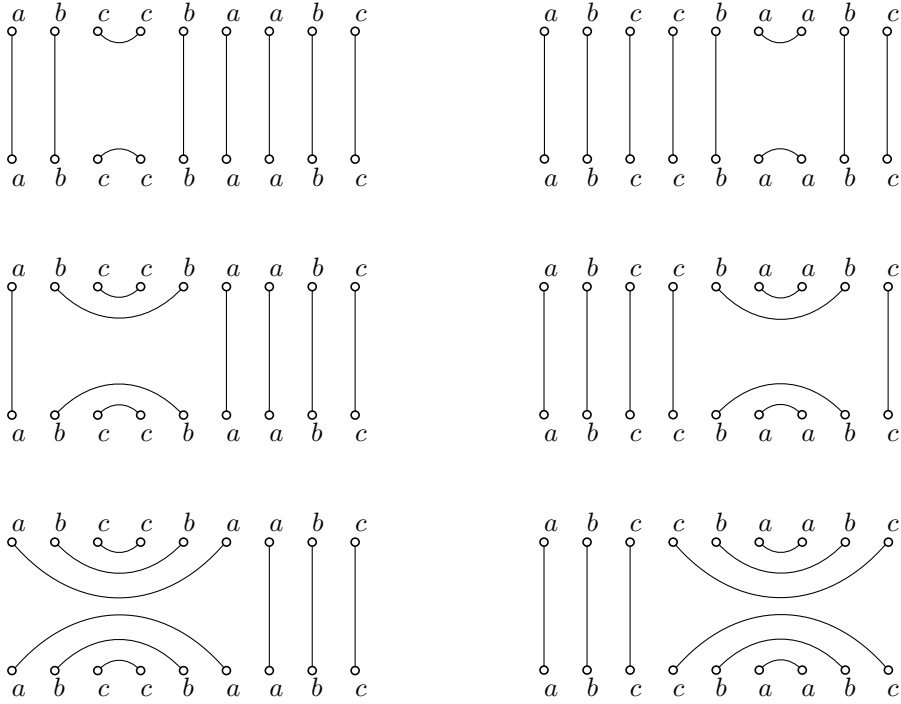

	\centering
	\includegraphics[scale=1, page=4]{algebraic_background}
	\hspace{12ex}
	\includegraphics[scale=1, page=5]{algebraic_background}\\
	\caption{The generators of $\TL_{6,3}(\alpha,\beta,\gamma)$ ($m=2$, $k=3$, and $c_1=a,c_2=b,c_3=c$). In the left column, starting in the first row, are the elements $u_1^{(1)}$, $u_1^{(2)}$ and $u_1^{(3)}$, in the right column the elements $u_2^{(1)}$, $u_2^{(2)}$ and $u_2^{(3)}$ respectively. }
	\label{fig:algebraic_background_color}
\end{figure}

Similar to the uncolored case, loops of color $c_i$ correspond to multiplication by a non-zero field element $\alpha_i$. For defining the relations, we follow \cite{D1998}. 
Set $\beta_i(0)=1$ for all $1\leq i \leq m$. Further, for $1 \leq p \leq k$ and $1\leq i \leq m$, set 
\begin{align*}
	\beta_i(p)=\left\{ \begin{array}{ll} \alpha_1\cdot \alpha_2 \cdots \alpha_p & \text{ if} \ i \text{ is even} \\
	\alpha_k\cdot \alpha_{k-1} \cdots \alpha_{k+1-p} & \text{ if} \ i \text{ is odd.} \end{array} \right.
\end{align*}   

\noindent Then the $k$-colored Fuss-Catalan algebra $\TL_{mk,k}(\alpha_1,\ldots,\alpha_k)$ has as generators the identity $I$ and $u_i^{(l)}, 1\leq i \leq m, 1\leq l \leq k$ subject to the relations
\begin{align}
u_i^{(p)}u_i^{(q)}&= u_i^{(q)}u_i^{(p)}= \beta_i(p)u_i^{(q)} \ \text{ if } p \leq q \\
 u_i^{(p)}u_j^{(q)} &= u_j^{(q)}u_i^{(p)} \ \text{ if } |i-j|>1 \ \text{ or } j=i \pm 1 \text{ and } p+q \leq k \\
 u_i^{(p)}u_{i \pm 1}^{(q)}&= \beta_i(k-q)u_i^{(p)}u_{i \pm 1}^{(k-p)} \ \text{ for } p+q>k. 
\end{align}
Note that $u_i^{(0)}=I$ for $1\leq i \leq m$ in these relations whenever needed. All the diagrams generated through this form a basis of $\TL_{mk,k}(\alpha_1,\ldots,\alpha_k)$. The number of basis elements of $\TL_{mk,k}(\alpha_1,\ldots,\alpha_k)$ is  $f(k,m):=\tfrac{1}{m}  {km+m \choose m-1}$ as shown in \cite{BJ1997}. The numbers $f(k,m)$ are  called Fuss-Catalan numbers, a generalization of the Catalan numbers $f(1,m)$. As mentioned above, we are interested in the diagrams and will from now on assume that $\alpha_i=1$ for $1 \leq i \leq k$.

\section{Matchings and triangulations}
\label{sec:uncolored}

In the following, we consider two classes of labeled plane geometric graphs on sets of points in convex position. 
We will tacitly assume that the points are always in convex position and that the graphs are plane. 
The first class are perfect matchings on $2n$ points in convex position. 
We will draw these matchings with two parallel rows of $n$ vertices each, labeled
$v_1$ to $v_{n}$ and $v_{n+1}$ to $v_{2n}$ in clockwise order, and with non-straight edges; see \figurename~\ref{fig:basic_drawing}(left). 
The second class are triangulations on $n+2$ points in convex position, labeled $p_1$ to $p_{n+2}$ in clockwise order; see 
\figurename~\ref{fig:basic_drawing}(right). 
For the sake of distinguishability, throughout this paper we will refer to $p_1, \ldots, p_{n+2}$ as \emph{points} and to $v_1, \ldots, v_{2n}$ as \emph{vertices}.
\begin{figure}[htb]
\centering\includegraphics[scale=1, page=1]{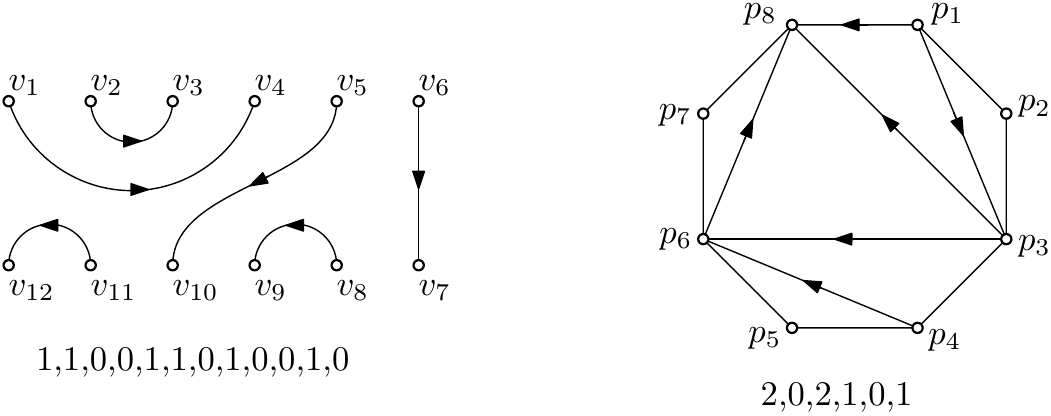}\\
\caption{A perfect matching (left) and the corresponding triangulation for $n=6$ (right).}
\label{fig:basic_drawing}
\end{figure}

The above defined structures are undirected graphs. 
We next give an implicit direction to the edges of these graphs: 
an edge $v_iv_j$ ($p_ip_j$) is directed from $v_i$ to $v_j$ ($p_i$ to $p_j$) for $i<j$, that is, each edge is directed from the vertex / point with lower index to the vertex / point with higher index.
This also defines the outdegree of every vertex / point, which we denote as $b_i$ for each vertex $v_i$ and as $d_i$ for each point $p_i$. 
For technical reasons, 
we do not count the edges on the boundary of the convex hull of a triangulation when computing the outdegree of a point $p_i$, with the exception of the edge $p_1p_{n+2}$.
We call the sequence $(b_1,\ldots, b_{2n})$ of the outdegrees of a matching (or the sequence $(d_1,\ldots, d_n)$ of the first $n$ outdegrees of a triangulation) its \emph{outdegree sequence}; see again \figurename~\ref{fig:basic_drawing}.  
We first show that for both structures, this sequence is sufficient to encode the graph.

For matchings, the outdegree sequence is a $0/1$-sequence with $2n$ digits, where $n$ digits are $1$ and $n$ digits are $0$. 
Moreover, the directions of the edges imply that an incoming edge at a vertex $v_j$ must be outgoing for a vertex $v_i$ with $i<j$. 
Thus, we have the condition $\sum_{i=1}^{\ell} b_i \geq l/2$ for any $1 \leq {\ell} \leq 2n$, that is, in any subsequence 
starting at $v_1$, we have at least as many $1$s as $0$s. 
Such sequences are called ballot sequences; see~\cite[p.69]{F1968}. 
Obviously, the outdegree sequence of a matching can be computed from a given matching in $O(n)$ time. 
But also the reverse is true: 
We consider the outdegrees from $b_1$ to $b_{2n}$.
We use a stack (with the usual push and pop operations) to store the indices of considered vertices that still need to be processed. 
Initially, the stack is empty.
If $b_i=1$, we push the index $i$ on the stack. 
If $b_i=0$, we pop the topmost index ${\ell}$ from the stack and output the edge $v_{\ell}v_i$. 
In this way, always the last vertex with `open' outgoing edge is connected to the next vertex with incoming edge, implying that the subgraph 
with vertices $v_{\ell}$ to $v_i$ is a valid plane perfect matching. 
A simple induction argument shows that the whole resulting graph is plane and can be reconstructed from the outdegree sequence in $O(n)$ time. 

For triangulations, first note that the outdegrees of $p_{n+1}$ and $p_{n+2}$ are 0.
Thus we do not lose information when restricting the outdegree sequence of a triangulation to $(d_1,\ldots, d_n)$. 
As in the previous case, the directions of edges imply that for any valid outdegree sequence, it holds that
$\sum_{i=1}^{\ell} d_{n+1-i} \leq \sum_{i=1}^{\ell} 1 = {\ell}$ for any $1 \leq {\ell} \leq n$.
This sum is precisely the maximum number of edges which can be outgoing from the `last' ${\ell}$ points $p_{n+1-{\ell}}$ to $p_n$. 
Recall that we do not consider the edges of the convex hull, except for $p_1p_{n+2}$, and thus the number of edges which contribute to the outdegree sequence is exactly $n$. 
As before, it is straightforward to compute the outdegree sequence from a given triangulation in $O(n)$ time.
For the reverse process, we again use a stack 
to store the indices of considered points that still need to be processed. 
We initialize the stack with push($n+2$) and push($n+1$) and output all the (non-counted) edges $p_i p_{i+1}$ for $1 \leq i \leq n+1$.
Then we consider the outdegrees in reversed order, that is, 
from $d_n$ to $d_1$. For each degree $d_i$ we perform two steps.
	(1) $d_i$ times, we pop the topmost index from the stack. After each pop let ${\ell}$ be the (new) topmost index on the stack and output the edge $p_ip_{\ell}$. Note that this edge together with the vertex whose index was just popped from the stack forms a triangle of the triangulation we construct.
	(2) We push $i$ on the stack.
This process constructs the triangulation from back to front, i.e., it inserts edges with higher start index first. When processing $p_i$, all points in the range $p_{i+1}$ to $p_{n+2}$ that are still `visible' from $p_i$ (i.e., all points that could still have an incoming edge from $p_i$) 
are in this order on the stack. 
Thus, drawing the edges in the described way generates a planar triangulation. 
At the end of the process, the stack contains exactly the two indices $n+2$ and~$1$, which can be ignored because they are the endpoints of the last generated edge.

So far we have shown that there exist an explicit bijection between outdegree sequences on the one side and matchings respectively triangulations on the other side. 
We now present a bijective transformation between outdegree sequences of matchings and those of triangulations. 

For a given outdegree sequence $B=(b_1, \ldots, b_{2n})$ of a perfect matching,
we compute the outdegree $d_i$ for the point $p_i$ of the  triangulation as the number of 1s between the $(i-1)$-st $0$ and the $i$-th $0$ in~$B$ 
for $i>1$, and set $d_1$ to the number of 1s before the first 0 in~$B$.

For the reverse transformation, we process the outdegree sequence 
of a triangulation from $d_1$ to $d_n$ and set the entries of $B$ in order from $b_1$ to $b_n$ in the following way:
For each entry $d_i$ we first set the next $d_i$ consecutive elements (possibly none) of $B$ 
to 1; then we set the next element of $B$ to 0. 
These 1 elements of $B$ can be regarded as corresponding to the outgoing edges incident with $p_i$, and
	the 0 element regarded as corresponding to the boundary edge adjacent to $p_i$ and going to $p_{i+1}$. 

By the constructions described in the previous two paragraphs it follows immediately that the two transformations are inverse to each other.
Recall that the conditions for valid outdegree sequences are $\sum_{i=1}^{\ell} b_i \geq l/2$ for any $1 \leq {\ell} \leq 2n$ for matchings, and 
$\sum_{i=1}^{\ell} d_{n+1-i} \leq {\ell}$ for any $1 \leq {\ell} \leq n$ for triangulations, respectively.
Having this in mind, it is not hard to see that the two transformations form a bijection between valid outdegree sequences of triangulations and valid outdegree sequences of matchings.
Moreover, each transformation can be performed in $O(n)$ time. 
\figurename~\ref{fig:n5} shows all corresponding perfect matchings, triangulations, and outdegree sequences for $n=3$.
  
\begin{figure}[htb]
\centering\includegraphics[scale=1, page=1]{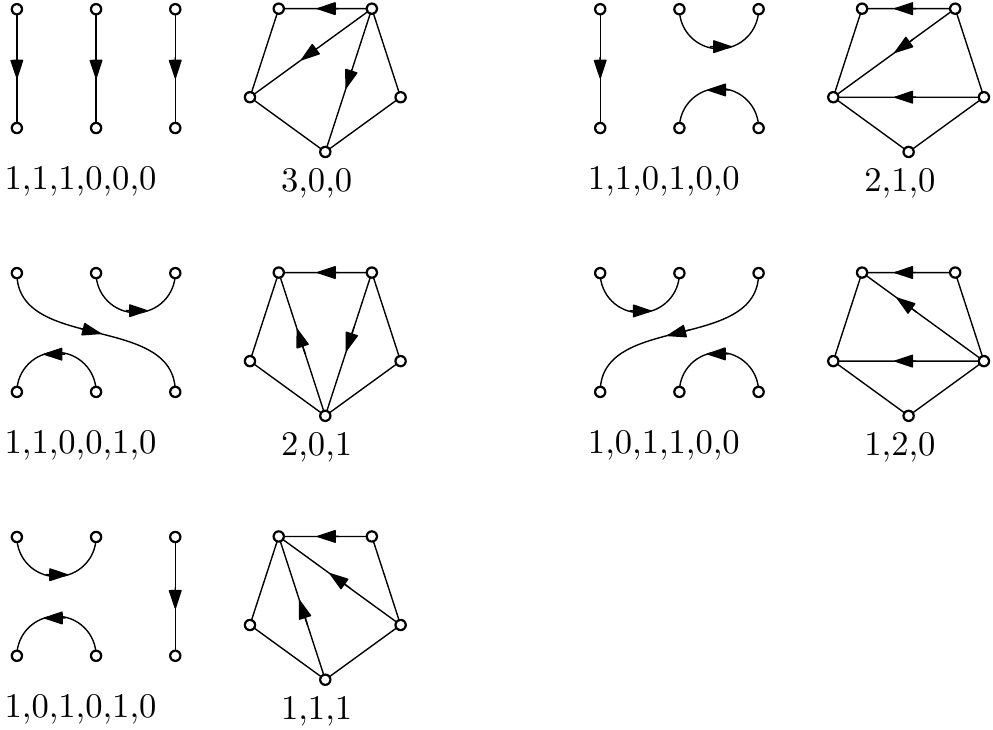}
\caption{All perfect matchings, triangulations, and outdegree sequences for $n=3$.} 
\label{fig:n5}
\end{figure}

\begin{thm}\label{thm:bij-1}
There exists a bijection between geometric plane perfect matchings on $2n$ points in convex position and geometric triangulations on $n+2$ points in convex position. 
Further, for an element of one structure, the corresponding element of the other structure can be computed in linear time.
\end{thm}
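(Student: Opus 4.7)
Essentially all the ingredients have already been assembled in the discussion preceding the statement; the plan is simply to collect them into a single clean composition. I would introduce three intermediate bijections: (a) plane perfect matchings on $2n$ vertices with ballot sequences $(b_1,\ldots,b_{2n})\in\{0,1\}^{2n}$ of weight $n$ satisfying $\sum_{i=1}^{\ell} b_i \geq \ell/2$; (b) those ballot sequences with triangulation outdegree sequences $(d_1,\ldots,d_n)$ satisfying $\sum_{i=1}^{\ell} d_{n+1-i} \leq \ell$; and (c) the latter sequences with triangulations on $n+2$ convex points. The theorem then follows by composing (a), (b), (c) and their inverses.

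For (a) and (c), the forward direction (graph to sequence) is a single $O(n)$ sweep reading off outdegrees in order. The reverse direction is the stack-based reconstruction spelled out in the text. I would formalize correctness by a short induction on $n$: the inequality condition on the sequence is exactly what is needed to guarantee that the stack is never popped when empty, and each edge that is output nests properly with the previously output edges. Hence the result is indeed a plane perfect matching, respectively a plane triangulation, and the two directions are mutually inverse.

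For (b), I would verify that the map sending $B$ to the sequence whose $i$-th entry counts the $1$s between the $(i-1)$-st and $i$-th zero of $B$ is inverted by the explicit rule given in the text (insert $d_i$ ones, then a zero, for each $i$ in order). The main obstacle, and the step I would treat most carefully, is showing that the two validity conditions correspond under this map. The cleanest way to see it is to locate the $(n-\ell)$-th zero of $B$ at position $p = 2n - \ell - \sum_{i=1}^{\ell} d_{n+1-i}$ and to rewrite the ballot inequality $\sum_{j=1}^{p} b_j \geq p/2$ using $\sum_{i=1}^{n} d_i = n$; the inequality collapses precisely to $\sum_{i=1}^{\ell} d_{n+1-i} \leq \ell$, and the converse direction is symmetric.

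Finally, each of the three constituent bijections is executed by a single linear pass over the data, so the composed maps in both directions run in $O(n)$ time, giving the complexity assertion and completing the proof.
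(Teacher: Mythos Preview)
Your proposal is correct and follows essentially the same route as the paper: the paper's argument is precisely the three-step composition (matchings $\leftrightarrow$ ballot sequences $\leftrightarrow$ triangulation outdegree sequences $\leftrightarrow$ triangulations) developed in the discussion preceding the theorem, with the same stack-based reconstructions and the same run-length transformation in the middle. Your explicit computation locating the $(n-\ell)$-th zero and collapsing the ballot inequality to $\sum_{i=1}^{\ell} d_{n+1-i}\le \ell$ is a welcome addition, since the paper only asserts that ``it is not hard to see'' the two validity conditions correspond; just note that it suffices to check the ballot inequality at zero positions because the slack $\sum_{j\le p} b_j - p/2$ attains its local minima there.
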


\section{Matchings with $k$ colors}
\label{sec:coloredmatchings}

In this section we add colors to the vertices of the perfect matchings and require the matching edges to be monochromatic. 
For $k \geq 2$, let $c_1, \ldots, c_k$ be the $k$ colors and let $n$ be a multiple of $k$. 
We color the vertices in a bitonic way, that is, in the order $c_1, c_2, \ldots, c_{k-1}, c_k, c_k, c_{k-1}, \ldots, c_2, c_1, c_1, c_2, \ldots$ and so on.
In a \emph{perfect $k$-colored matching}, all matching edges connect vertices of the same color, and hence $n$ is a multiple of $k$; see \figurename~\ref{fig:matching_colored} for an example of a $k$-colored matching with $k=3$ colors and $n=9$.
\begin{figure}[htb]
\centering\includegraphics[scale=1,page=1]{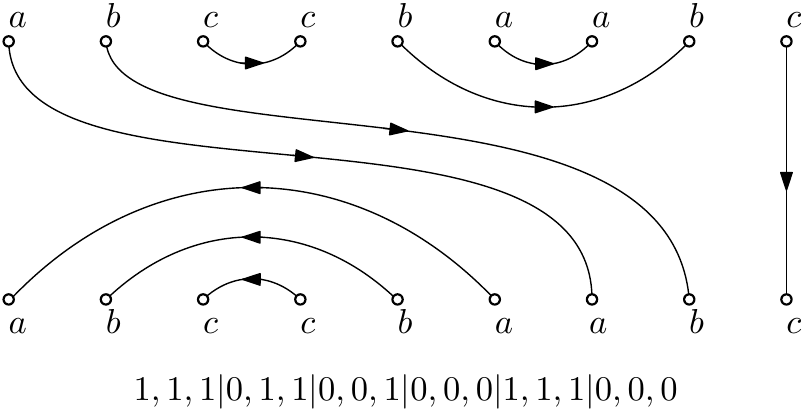}
\caption{Perfect $k$-colored matching for $k=3$ colors and $n=9$ and its outdegree sequence.}
\label{fig:matching_colored}
\end{figure}

Clearly, the set of $k$-colored matching is a subset of the set of non colored matchings considered in the last section, and thus all properties considered there still hold.
But not every matching obtained in the previous section is a $k$-colored matching and hence not every outdegree sequence of a matching is an outdegree sequence of a valid $k$-colored matching.
Thus we now derive additional properties to determine which outdegree sequences of matchings correspond to $k$-colored matchings.

We denote $k$ consecutive vertices $v_i,\ldots,v_{i+k-1}$ that are colored with either $c_1,\ldots,c_k$ or $c_k,\ldots,c_1$ as a \emph{block}. 
In total we have $2n/k$ such blocks and they form a partition of $2n$ vertices. 
Observe that within a block, there cannot be a vertex with an incoming edge after a vertex with an outgoing edge, as this would cause a bichromatic edge. 
Hence, in a $k$-colored matching, the outdegree sequence of any block has to be of the form $|0,\ldots,0,1,\ldots,1|$ (where it can consist entirely of 0 or 1 entries).  
For better readability, we sometimes mark block boundaries in an outdegree sequence with vertical lines.
We say that an outdegree sequence (and the matching) fulfilling this property has a \emph{valid block structure}.

\begin{lem}
\label{lem:shortestedge}
Let $M$ be a perfect matching with valid block structure that is not a $k$-colored matching. Then there exists an edge $v_s v_e$ in $M$ with the following properties: 
\begin{enumerate}[label=(\roman*),partopsep=0ex,topsep=1ex,parsep=0ex,itemsep=1ex]
	\item\label{prop:different_blocks} 
		The vertices $v_s$ and $v_e$ lie in different blocks, say $v_s \in S$ and $v_e \in E$. 
    \item\label{prop:minimal} 
		The subsequence from $v_{s+1}$ to $v_{e-1}$ contains no bichromatic matching edge. 
	\item\label{prop:odd_between} 
		The number of blocks between $S$ and~$E$ is odd.
	\item\label{prop:i_i_plus_1}
		Let $v_s$ be the $i$-th vertex in $S$. Then $v_e$ is the $(i+1)$-st vertex in $E$.
\end{enumerate}
\end{lem}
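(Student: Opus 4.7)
The plan is to choose $v_s v_e$ to be a bichromatic edge of $M$ minimizing $e-s$; such an edge exists because $M$ has at least one bichromatic edge by assumption. I will verify the four properties in the order (ii), (i), (iii), (iv).

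For (ii), by planarity of $M$, every matching edge with an endpoint in $\{v_{s+1},\ldots,v_{e-1}\}$ must have both endpoints in that range, so spans strictly less than $e-s$; by minimality, no such edge can be bichromatic. For (i), I invoke the valid block structure: were $v_s$ and $v_e$ in the same block, the block's outdegree pattern $0,\ldots,0,1,\ldots,1$ would force the outgoing vertex $v_s$ to appear after the incoming vertex $v_e$, contradicting $s<e$.

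For (iii) and (iv), the key observation will be that the vertices $v_{s+1},\ldots,v_{e-1}$ are perfectly matched among themselves (again by planarity) using only monochromatic edges (by (ii)). Hence, for every color $c_p$, the number of color-$c_p$ vertices in this open range must be even. Writing $v_s$ as the $i$-th vertex of $S$ and $v_e$ as the $j$-th vertex of $E$, and letting $m$ denote the number of blocks strictly between $S$ and $E$, I would split this count into three contributions: the middle blocks (each containing every color once) contribute $m$, while the tail of $S$ after $v_s$ and the head of $E$ before $v_e$ contribute a $0/1$ indicator depending on $p$, on $i$ or $j$, and on whether the corresponding block is ``up'' (colored $c_1,\ldots,c_k$) or ``down'' (colored $c_k,\ldots,c_1$).

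A short case analysis on the orientations of $S$ and $E$ then closes the proof. If the orientations are opposite, then $m$ is even (the block indices have opposite parity), and the bichromatic condition rules out the ``mirror'' case $i+j=k+1$; one can then always exhibit a color $c_p$ whose total count equals $m+1$, violating parity. Hence $S$ and $E$ must share orientation, forcing $m$ odd and yielding (iii). Under a common orientation, the parity condition applied to the colors strictly between those of $v_s$ and $v_e$ forces that range of colors to be empty, giving $j=i+1$ and hence (iv). The main obstacle will be the parity bookkeeping across these orientation sub-cases; the rest is essentially immediate.
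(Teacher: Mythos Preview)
Your approach is essentially the same as the paper's: both pick a bichromatic edge of minimal span $e-s$ and verify the four properties from the fact that the interior $v_{s+1},\ldots,v_{e-1}$ is perfectly and monochromatically matched. The paper phrases (iii) and (iv) as a direct ``same set of colours'' argument (even $m$ forces the tail of $S$ and the head of $E$ to carry identical colour sets, whence $v_s$ and $v_e$ share a colour; odd $m$ forces the tail and head together to contain each colour exactly once, whence the positions are consecutive), while you unpack this into an explicit parity count with a case split on block orientations. These are the same argument in slightly different clothing; your version is a bit more bookkeeping-heavy but entirely sound. One small point: in your sketch of (iv) you only mention the colours ``strictly between'' those of $v_s$ and $v_e$, which handles the case $j>i$; you should also note that $j<i$ is ruled out by the same parity count (the colours with indices in $[j,i]$ then have odd total), so that $j=i+1$ is indeed the only surviving option.
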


\begin{proof}
To prove the lemma we assume that $v_s v_e$ is a shortest (with respect to the difference of the indices) edge which connects two vertices of different color and show that any such edge has to fulfill the four properties.\\
{\bf \ref{prop:different_blocks} } As the matching has a valid block structure, no bichromatic edge within a block can exist.\\ 
{\bf \ref{prop:minimal} } If the subsequence from $v_{s+1}$ to $v_{e-1}$ contains a bichromatic matching edge, then this edge is shorter, a contradiction. \\ 
{\bf \ref{prop:odd_between} }	Assume there is an even number of blocks between $S$ and $E$. Then each color shows up in these blocks an even number of times.
	Hence, by Property~\ref{prop:minimal}, the set of vertices in $S$ after $v_s$ has the same set of colors as the set of vertices in $E$ before $v_e$. 
	As $S$ and $E$ are colored in reversed order, this implies that $v_s$ and $v_e$ have the same color, a contradiction. \\ 
{\bf \ref{prop:i_i_plus_1} } As there is an odd number of blocks between $S$ and $E$, by Property~\ref{prop:minimal}, 
	the union of the set of vertices in $S$ after $v_s$ and the set of vertices in $E$ before $v_e$ contains exactly one vertex of each color.  
	As further $S$ and $E$ are colored in the same order, we conclude that the position of $v_e$ in $E$ is 'right after' the position of $v_s$ in~$S$.
\end{proof}

The proof of Lemma~\ref{lem:shortestedge} implies the following theorem.

\begin{thm}
\label{thm:valid}
A matching is a $k$-colored matching if and only if it has a valid block structure and does not contain an edge as described in Lemma~\ref{lem:shortestedge}.
\end{thm}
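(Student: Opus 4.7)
The plan is to prove both directions of the equivalence, most of which is already implicit in the discussion preceding Lemma~\ref{lem:shortestedge} and in the lemma itself.

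For the forward direction, suppose the matching is $k$-colored. First I would observe that the validity of the block structure is essentially contained in the paragraph preceding the lemma: within a block the $k$ colors are pairwise distinct, so no matching edge lies entirely inside a block, and a block that contained an outdegree-$1$ vertex followed later by an outdegree-$0$ vertex would force, by planarity and monochromaticity, a matched pair inside the block, contradicting the distinctness of colors. Second, I would argue that no edge satisfying Properties (i)--(iv) of Lemma~\ref{lem:shortestedge} can be present, because any such edge is necessarily bichromatic: Property (iii) gives an odd number of blocks strictly between $S$ and $E$, forcing $S$ and $E$ to have the same orientation under the bitonic coloring pattern, and Property (iv) then places $v_s$ and $v_e$ at consecutive positions within these identically-oriented blocks, so their colors differ. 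A $k$-colored matching contains no bichromatic edges, so no such edge can exist.

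The backward direction is essentially the contrapositive of Lemma~\ref{lem:shortestedge}. If a matching $M$ has a valid block structure but is not $k$-colored, then by definition $M$ contains at least one bichromatic edge; taking one of shortest index difference and invoking the lemma yields an edge satisfying all four listed properties. Thus, under a valid block structure, the nonexistence of such an edge forces $M$ to be $k$-colored.

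The only mildly nontrivial step is the verification that Properties (i), (iii), (iv) of the lemma really force bichromaticity; this hinges on how the bitonic pattern assigns color orderings to blocks of opposite parity. Everything else is either a direct quotation from the preceding discussion or a contrapositive of Lemma~\ref{lem:shortestedge}, so the theorem is an almost immediate consequence of the lemma, as the authors foreshadow.
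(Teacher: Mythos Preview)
Your proof is correct and is precisely the detailed unpacking the paper has in mind when it writes only ``The proof of Lemma~\ref{lem:shortestedge} implies the following theorem.'' In particular, your key observation for the forward direction---that properties~(iii) and~(iv) force $S$ and $E$ to carry the same color orientation and hence place $v_s$, $v_e$ at consecutive (thus differently colored) positions---is exactly what is implicit in the proof of property~(iv) in Lemma~\ref{lem:shortestedge}, where it is noted that ``$S$ and $E$ are colored in the same order''; and your backward direction is simply the contrapositive of the lemma itself.
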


Remark: For a given outdegree sequence we can check in linear time if
it is an outdegree sequence of a $k$-colored matching by using the
reconstruction algorithm described in Section~\ref{sec:uncolored}.

\section{Tilings with $t$-gons}

For any $t \geq 3$, a \emph{$t$-gonal tiling} or \emph{$t$-angulation} $T$ on $n+2$ points in convex position, labeled $p_1$ to $p_{n+2}$ in clockwise order, is a plane graph where every bounded face is a $t$-gon and the vertices along the unbounded face are $p_1, p_2, \ldots, p_{n+2}$ in this order; see \figurename~\ref{fig:5-gonaltiling} for an example.
For the special case of $t=3$, $T$ is a triangulation.
In the next section, we will show that the $k$-colored matchings on $2n$ vertices of the previous section correspond to $(k\!+\!2)$-gonal tilings of 
$n+2$ points in convex position, where $n=km$ for some integer $m>0$. 
This is a generalization of the fact that 
matchings (i.e., $k=1$) correspond to triangulations. 
To this end we first derive several properties of $t$-gonal tilings of convex sets.  

\begin{figure}[htb]
\centering\includegraphics[scale=1,page=2]{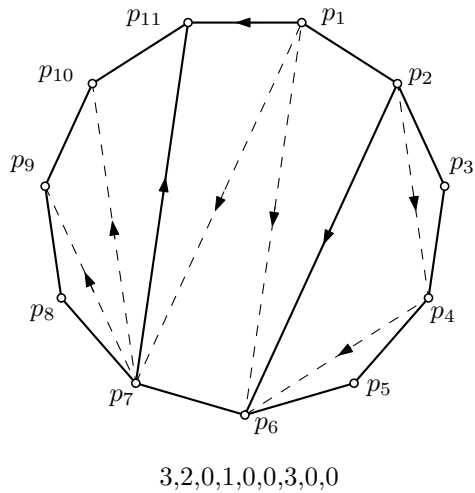}
\caption{5-gonal tiling corresponding to the 3-colored matching of \figurename~\ref{fig:matching_colored} and the outdegree sequence of its $k$-color valid triangulation.}
\label{fig:5-gonaltiling}
\end{figure}

The \emph{dual graph} of a $t$-gonal tiling $T$ has a vertex for each bounded face 
$T$ and two vertices are connected by an edge if the corresponding faces 
share a common edge in $T$ (every pair of bounded faces shares at most one edge). 
An \emph{ear} of $T$ is a $t$-gon which shares all but one edge with the unbounded face 
and can thus be cut off of $T$ (along this edge) so that the remaining part is a valid $t$-gonal tiling of $n+2-(t-2)=n+4-t$ points.

As the dual graph of any $t$-gonal tiling $T$ is a tree, as every tree with at least two vertices has at least two leaves (where the minimal case is obtained by a path), and as a leaf in the dual graph of $T$ corresponds to an ear in $T$, we have the following observation:
\begin{obs}
\label{obs:ear}
Every $t$-gonal tiling with at least $2t-2$ points has at least two ears. 
At least one of these ears is not incident to the edge $p_1 p_{n+2}$.
\end{obs}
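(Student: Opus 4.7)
The plan is to expand the informal three-line argument preceding the observation into a careful proof by verifying its three ingredients in turn: (a) the dual graph of a $t$-gonal tiling is a tree; (b) under the hypothesis $n+2 \geq 2t-2$ this tree has at least two vertices, so at least two leaves; (c) leaves of the dual graph correspond exactly to ears, and at most one ear can use the chord $p_1p_{n+2}$ as one of its boundary edges.

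For (a), I would note that the dual graph is connected because any two bounded faces can be joined by a sequence of face-adjacencies within the simply connected region bounded by the convex $(n+2)$-gon, and that it is acyclic because a cycle in the dual graph would enclose at least one tiling vertex in its interior, contradicting the fact that all $n+2$ vertices lie on the outer boundary. For (b), I would use a short double-counting argument: letting $f$ denote the number of bounded faces and $E_i$ the number of interior edges, incidence counting yields $tf = 2E_i + (n+2)$ (each $t$-gon has $t$ edges, interior edges lie on two bounded faces, the $n+2$ boundary edges on one). Combined with Euler's formula $V-E+F=2$ applied to the planar graph with $V=n+2$, $E=E_i+(n+2)$, $F=f+1$, this gives $E_i = f-1$ and hence $f = n/(t-2)$. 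The assumption $n+2 \geq 2t-2$ rewrites as $n \geq 2(t-2)$, so $f \geq 2$.

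For (c), I would invoke the elementary fact that every tree on at least two vertices has at least two leaves, and observe that a leaf of the dual graph is a $t$-gon that shares exactly one edge with another bounded face and hence shares its remaining $t-1$ edges with the unbounded face, matching the definition of an ear. To conclude the second assertion of the observation, note that the boundary edge $p_1p_{n+2}$ bounds exactly one bounded face, so at most one ear contains $p_1p_{n+2}$ among its $t-1$ boundary edges; since there are at least two ears in total, at least one of them avoids $p_1p_{n+2}$. The only mildly delicate step is the tree property in (a); this is standard planar-duality folklore, but it is the one place where an otherwise purely combinatorial proof needs a topological input about the simply connected region bounded by the convex hull.
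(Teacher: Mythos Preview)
Your proposal is correct and follows exactly the approach the paper sketches in the sentence preceding the observation: the dual graph is a tree, a tree on at least two vertices has at least two leaves, and leaves correspond to ears. You merely expand each of these ingredients---in particular the face count $f=n/(t-2)$ via Euler's formula to verify $f\ge 2$, and the remark that $p_1p_{n+2}$ bounds a unique bounded face---which the paper leaves implicit.
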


\begin{lem}\label{lem:at_most_1_tiling}
Any triangulation $\cal T$ on $n+2$ points in convex position contains at most one $t$-gonal tiling as a subgraph.
\end{lem}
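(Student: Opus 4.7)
The plan is to characterize, purely in terms of $\cal T$, which diagonals must belong to any $t$-gonal tiling contained in $\cal T$, and then to show that these diagonals completely determine the tiling. First I would call a diagonal $p_i p_j$ (with $i<j$) of the convex $(n+2)$-gon \emph{admissible} if $j-i \equiv 1 \pmod{t-2}$; equivalently, if it splits the polygon into two sub-polygons whose vertex counts are each $\equiv 2 \pmod{t-2}$.

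The easy half of the plan is to observe that every diagonal of any $t$-gonal tiling is automatically admissible: such a diagonal separates the tiling into two smaller $t$-gonal tilings, and a convex polygon admits a $t$-gonal tiling only when its number of vertices has the correct residue modulo $t-2$. The harder half, and the main obstacle in the argument, is the converse statement: every admissible diagonal of $\cal T$ must lie in every $t$-gonal tiling $T \subseteq \cal T$. Once both halves are in place, the diagonal set of any such $T$ coincides with the set of admissible diagonals of $\cal T$, which is determined by $\cal T$ alone, and uniqueness follows.

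For the converse I would argue by contradiction: suppose $d = p_i p_j$ is admissible and $d \in \cal T \setminus T$. Then $d$ lies in the interior of a single $t$-gonal face $F$ of $T$. Because $p_i$ and $p_j$ are extreme points of the full point set, they cannot lie in the relative interior of an edge of $F$, so they must be vertices of $F$. Thus $d$ is a proper chord of the convex $t$-gon $F$, splitting its vertex set as $\{p_i\} \cup \{s_1,\ldots,s_a\} \cup \{p_j\} \cup \{u_1,\ldots,u_b\}$ with $a+b = t-2$ and $a, b \geq 1$ (which already dispatches the case $t=3$). Cutting the whole polygon along $d$ gives two sub-polygons $P^+$ and $P^-$; the restriction of $T$ to $P^+$ consists of the single $(a+2)$-gon $F \cap P^+$ together with some number $m_+$ of complete $t$-gons of $T$ contained in $P^+$. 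A face-degree sum (equivalently, Euler's formula) applied to this partition of $P^+$ yields
\begin{equation*}
\alpha \;=\; a + (t-2)\,m_+,
\end{equation*}
where $\alpha+2$ is the number of vertices of $P^+$. Admissibility of $d$ forces $\alpha \equiv 0 \pmod{t-2}$, hence $a \equiv 0 \pmod{t-2}$, contradicting $1 \leq a \leq t-3$. This closes the harder half and, together with the first observation, gives the lemma.
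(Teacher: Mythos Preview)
Your argument is correct and genuinely different from the paper's. The paper proceeds by induction on $n$: it takes two hypothetical $t$-gonal subtilings $T_1,T_2\subseteq\mathcal T$, picks an ear $E$ of $T_1$ (via Observation~\ref{obs:ear}), and argues that $E$ must also be a face of $T_2$, since otherwise an edge of $T_2$ would have to cross the ear-edge of $E$ inside $\mathcal T$; removing $E$ from both and invoking the induction hypothesis finishes.

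Your route bypasses induction entirely by pinning down the edge set of any $t$-gonal subtiling intrinsically: it must coincide with the set of diagonals $p_ip_j\in\mathcal T$ satisfying $j-i\equiv 1\pmod{t-2}$. The forward inclusion is immediate from the residue constraint on tileable polygons, and your contradiction for the reverse inclusion (via the count $\alpha=a+(t-2)m_+$ on one side of an offending diagonal) is clean; the only point worth spelling out slightly more is why $d$ lies in a \emph{single} face $F$ of $T$, but this follows at once from $T\subseteq\mathcal T$ and planarity of $\mathcal T$. Compared to the paper, your approach has the advantage of yielding an explicit, instantly checkable description of the unique subtiling (just test the congruence on each diagonal of $\mathcal T$), which in particular gives a trivial linear-time extraction algorithm. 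The paper's ear-peeling proof, on the other hand, dovetails with the inductive machinery (Lemmas~\ref{lem:extend}--\ref{lem:earexistence}) used later to relate tilings to $k$-colored matchings.
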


\begin{proof}
We prove the lemma by induction on $n$.
For $n+2=t$ the statement is obviously true, so let $n+2 \geq 2t-2$ and let $T_1$ and $T_2$ be two $t$-gonal tilings which are subgraphs of $\cal T$.  
By Observation~\ref{obs:ear} there exists an ear $E$ in $T_1$. 
Let $p_a p_b$, $a<b$, be the edge of $\cal T$ such that $E$ can be separated from the rest of $T_1$ by this edge. 
Moreover let $e$ be an edge that is incident to $E$ and to the unbounded face of $\cal T$. 
Then the (unique) $t$-gon in $T_2$ that is incident to $e$ must be $E$: Otherwise there is an edge connecting a point $p_x$ between $p_a$ and $p_b$  to a point $p_y$ outside the sequence from $p_a$ to $p_b$. 
Then $p_a p_b$, which is part of $T_1$, crosses $p_x p_y$, which is part of $T_2$. 
This is a contradiction to the planarity of $\cal T$ (recall that $T_1$ and $T_2$ are subgraphs of $\cal T$). 
Thus we can remove $E$ from both $T_1$ and $T_2$, and obtain two $t$-gonal tilings of a smaller set of points contained in the restriction of $\cal T$. 
By induction, these smaller $t$-gonal tilings are the same, and hence $T_1$ and $T_2$ are the same as well.
\end{proof}

Obviously, if a triangulation $\cal T$ on $n+2$ points contains a $t$-gonal tiling $T$ as a subgraph, then $n$ is a multiple of $t-2$. 
Further, as $T$ has at least two ears, $\cal T$ contains at least two edges that cut off a triangulated $t$-gon from ${\cal T}$. 
We call such a $t$-gon that can be split off from a triangulation $\cal T$ a \emph{$t$-ear} of $\cal T$ and refer to the edge along which the $t$-ear can be split off as an \emph{ear-edge} (of the $t$-ear).
Note that for $t>3$,  not every triangulation contains $t$-ears.

Let $\cal T$ be a triangulation that contains a $t$-ear with ear-edge $p_rp_s$ for some $r \geq 1$ and $s=r+t-1 \leq n+2$. 
Let $B$ be the outdegree sequence of the corresponding matching, obtained as described in Section~\ref{sec:uncolored}.
If $s < n+2$, then in $B$, the $t$-ear corresponds to a subsequence $W$ (obtained from $p_r,\ldots, p_{s-1}$) of $B$ of length $2t-3$ that 
  starts with a $1$ (for $p_rp_s$),  ends with two 0s (as the last point $p_{s-1}$ of the ear cannot have outgoing edges), and 
  has $t-1$ 0s and $t-2$ 1s in total. 
If $s = n+2$, then the point $p_{s-1} = p_{n+1}$ does not contribute to the outdegree sequence, cf.~Section~\ref{sec:uncolored}.
Thus the according subsequence $W$ has length $2t-4$ and is $W=(b_{2n-2t+5},\ldots, b_{2n})$, which must be a ballot sequence. 

\section{Relating $k$-colored matchings and $(k\!+\!2)$-gonal tilings}

We say that a triangulation on $n+2$ points in convex position is \emph{$k$-color valid} if by the bijection defined in Section~\ref{sec:uncolored} it corresponds to a $k$-colored matching as defined in Section~\ref{sec:coloredmatchings}. 
The outdegree sequence of such a triangulation is then also called $k$-color valid. 
A $(k\!+\!2)$-gonal tiling of $n+2$ points is called $k$-color valid if it can be completed to (i.e., is a subgraph of) a $k$-color valid triangulation. 
In the following, let $t=k+2$.

\begin{obs}\label{obs:t_ear_code}
	Let $\cal T$ be a $k$-color valid triangulation that contains a $t$-ear with ear-edge $p_rp_s$ for some $r \geq 1$ and $s=r+t-1 \leq n+2$.
	Let the first entry of the subsequence $W$ of $B$ that corresponds to this $t$-ear 
	be the $i$-th entry within its block, for $1\!\leq\!i\!\leq\!k$.
	If $s\!=\!n\!+\!2$ then $i=1$ and  $W=(|1,\ldots,1|0,\ldots,0|) = (|1^k|0^k|)$.
	Otherwise, recall from Section~\ref{sec:coloredmatchings} that within a block no 1 can be placed before a 0, and thus it holds that $W=(1,\ldots,1|0,\ldots,0,1,\ldots,1|0,\ldots,0) = (1^{k-i+1}|0^{k-i+1},1^{i-1}|0^{i})$. 
	In the former case, removing the t-ear is equivalent to removing $W$ from $B$. 
	In the latter case, all but the last $0$ of $W$ is removed from $B$.
\end{obs}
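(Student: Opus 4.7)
The plan is to combine two ingredients already in hand: the length and $0/1$ counts of $W$, established in the paragraph immediately before the observation, together with the valid block structure of $B$ implied by Theorem~\ref{thm:valid} (each block of $k$ consecutive entries of $B$ has the form $0^a1^{k-a}$, since $\cal T$ is $k$-color valid).

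I would start with the easier case $s=n+2$. Here the preceding paragraph already gives $W=(b_{2n-2t+5},\ldots,b_{2n})$, i.e., the last $2k$ entries of $B$. Since $2n$ is divisible by $k$, position $2n-2k+1$ starts a block, so $i=1$ falls out automatically. The two blocks covered by $W$ each have the form $0^a1^{k-a}$, and, combined with the counts of $k$ ones and $k$ zeros and the fact that the first entry of $W$ is the $1$ for the ear-edge $p_rp_{n+2}$, a short argument forces the first block to be $1^k$ and the second to be $0^k$. Equivalently, the block structure forces $d_r=k$ and $d_{r+1}=\cdots=d_n=0$, i.e., the $t$-ear is triangulated as a fan from $p_r$, and the pattern can be read off from there.

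For the main case $s<n+2$, I would locate the first entry of $W$ at position $i$ of some block $\alpha$; by the setup it is the $1$ corresponding to the ear-edge $p_rp_s$. The block structure forbids a $0$ from following a $1$ inside a block, so positions $i,i+1,\ldots,k$ of block $\alpha$ must all be $1$s, giving the leading $1^{k-i+1}$. Block $\alpha+1$ sits entirely inside $W$ and has the form $0^a1^{k-a}$; since $W$ contains $k$ ones in total and $k-i+1$ are already inside block $\alpha$, block $\alpha+1$ must contribute the remaining $i-1$ ones, pinning $a=k-i+1$ and hence reading $0^{k-i+1}1^{i-1}$. The remaining $i$ positions of $W$ live in block $\alpha+2$; the $0$-count leaves exactly $i$ zeros unplaced, and by block structure these fill the leading $i$ positions of block $\alpha+2$.

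The one step I expect to be subtle is verifying that the first entry of $W$ really is the $1$ coming from the ear-edge $p_rp_s$ (rather than, say, an earlier $1$ in $p_r$'s contribution corresponding to an edge outside the ear). For $s=n+2$ this is forced by the explicit formula for $W$ as the tail of $B$. For $s<n+2$ this is already asserted in the paragraph preceding the observation, where $W$ is described as the length-$(2k+1)$ subsequence that starts with a $1$ for $p_rp_s$ and ends with two $0$s from $p_{s-2}$ and $p_{s-1}$; I would simply invoke that description. Once the starting $1$ is located, everything else is a mechanical application of the block pattern $0^a1^{k-a}$ combined with the $0/1$ counts.
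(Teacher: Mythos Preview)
Your approach is exactly the one the paper has in mind (indeed the observation is stated with its justification inline, and you are simply spelling it out): combine the length and $0/1$ counts of $W$ from the preceding paragraph with the valid block structure $0^{a}1^{k-a}$ coming from Theorem~\ref{thm:valid}. The block decomposition of $W$ into the tail of block $\alpha$, all of block $\alpha+1$, and the head of block $\alpha+2$ is correct.

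There is one small logical slip in your $s<n+2$ case. When you write ``since $W$ contains $k$ ones in total and $k-i+1$ are already inside block $\alpha$, block $\alpha+1$ must contribute the remaining $i-1$ ones,'' you are implicitly assuming that the $i$ positions of $W$ lying in block~$\alpha+2$ contain no $1$s; without that, the ones count only tells you that block~$\alpha+1$ and the block-$\alpha+2$ portion \emph{together} contribute $i-1$ ones. The fix is to use the other end condition you already quoted: $W$ ends with (two) $0$s, so position $i$ of block~$\alpha+2$ is a $0$, and by the block pattern $0^{b}1^{k-b}$ all of positions $1,\ldots,i$ of block~$\alpha+2$ are $0$s. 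Once that is in place, your ones count pins $a=k-i+1$ in block~$\alpha+1$ and the argument goes through exactly as you wrote it. Swap the order of those two steps and the proof is complete.
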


\begin{obs}\label{obs:t_ear_code_converse}
Using the same setting as in Observation~\ref{obs:t_ear_code} the converse also holds: if $B$ contains a subsequence $W= (1^{k-i+1}|0^{k-i+1},1^{i-1}|0^{i})$ or the end of B is $W = (|1^k|0^k|)$  then  $\cal T$ contains a $t$-ear.
\end{obs}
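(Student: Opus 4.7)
My plan is to use the stack-based triangulation reconstruction algorithm from Section~\ref{sec:uncolored} to read off the local structure of $\mathcal{T}$ in the window of outdegrees determined by the subsequence $W$. Recall that this algorithm processes outdegrees from $d_n$ down to $d_1$; for each $d_\ell$ it pops $d_\ell$ indices from a stack (outputting an edge from $p_\ell$ to the new top after each pop) and then pushes $\ell$. Since the occurrence of $W$ fixes a contiguous window of $d$-values, tracing the stack through this window directly exhibits enough edges of $\mathcal{T}$ to identify a $t$-ear.

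For the general pattern $W = (1^{k-i+1}|0^{k-i+1},1^{i-1}|0^{i})$, I would let $j$ denote the index such that the first $0$ of $W$ is the $j$-th $0$ of $B$. Then the $k+1$ zeros of $W$ correspond to the convex-hull edges $p_j p_{j+1},\ldots,p_{j+k}p_{j+k+1}$ of $\mathcal{T}$, and the placement of the ones in $W$ translates into $d_j \geq k-i+1$, $d_{j+k-i+1}=i-1$, and $d_\ell = 0$ for every $\ell \in \{j+1,\ldots,j+k\}\setminus\{j+k-i+1\}$. When the reconstruction reaches $d_{j+k}$, the top of the stack is $j+k+1$ (either just pushed by the processing of $d_{j+k+1}$, or placed there by the initialization if $j+k+1=n+1$). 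Stepping through the window, the zeros $d_{j+k},\ldots,d_{j+k-i+2}$ push $j+k, j+k-1,\ldots,j+k-i+2$ on top; processing $d_{j+k-i+1}=i-1$ then pops exactly those $i-1$ indices, finally uncovering $j+k+1$ and producing the edges $p_{j+k-i+1}p_{j+k-i+3},\ldots,p_{j+k-i+1}p_{j+k},p_{j+k-i+1}p_{j+k+1}$. The second block of zeros pushes $j+k-i,\ldots,j+1$, and the first $k-i+1$ pops of $d_j$ produce $p_j p_{j+2},\ldots,p_j p_{j+k-i+1},p_j p_{j+k+1}$.

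Combined with the $k+1$ boundary edges above, these diagonals show that the $(k+2)$-gon on the consecutive points $p_j,p_{j+1},\ldots,p_{j+k+1}$ is entirely contained in and triangulated by $\mathcal{T}$: the split diagonal $p_j p_{j+k-i+1}$ divides it into a $(k-i+2)$-gon fanned from $p_j$ and an $(i+2)$-gon fanned from $p_{j+k-i+1}$, accounting for the full count of $k-1$ interior diagonals, while the diagonal $p_j p_{j+k+1}$ closes this $(k+2)$-gon off from the rest of $\mathcal{T}$. Hence $\mathcal{T}$ contains a $t$-ear with ear-edge $p_j p_{j+k+1}$. The end-of-$B$ case $W = (|1^k|0^k|)$ is handled analogously but more simply: the upper bound $d_{n-k+1} \leq n+1-(n-k+1) = k$ together with $d_{n-k+1}\geq k$ (from the $k$ ones in $W$) forces $d_{n-k+1} = k$, so $p_{n-k+1}$ is fanned to all of $p_{n-k+3},\ldots,p_{n+2}$, yielding the $t$-ear $p_{n-k+1}\cdots p_{n+2}$ with ear-edge $p_{n-k+1}p_{n+2}$.

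The main obstacle I foresee lies in the bookkeeping of the stack trace for the general case: I would need to argue cleanly that during the processing of the window no index from outside $\{j+1,\ldots,j+k+1\}$ is pushed onto the stack, so that the $i-1$ pops of $d_{j+k-i+1}$ actually uncover $j+k+1$ (producing the crucial edge $p_{j+k-i+1}p_{j+k+1}$) and the $(k-i+1)$-st pop of $d_j$ likewise exposes $j+k+1$ (producing the closing diagonal $p_j p_{j+k+1}$).
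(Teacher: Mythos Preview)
Your argument is correct, and in fact the paper offers no proof at all for this observation: it is stated as an immediate converse of Observation~\ref{obs:t_ear_code}, relying on the fact that the subsequence~$W$ determines the local pattern of outdegrees $d_j,\ldots,d_{j+k}$ and hence, via the bijection of Section~\ref{sec:uncolored}, the local structure of~$\mathcal{T}$. Your explicit stack trace is a perfectly valid (and more detailed) way to make this precise.

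The obstacle you flag is not a genuine one. The reconstruction algorithm pushes exactly the index~$\ell$ after processing $d_\ell$, so between the start of processing $d_{j+k}$ and the start of processing $d_j$ the only indices pushed are $j+k,j+k-1,\ldots,j+1$, all sitting on top of the $j+k+1$ that was already there. At the moment $d_{j+k-i+1}$ is processed, precisely the $i-1$ indices $j+k-i+2,\ldots,j+k$ lie above $j+k+1$, so the $i-1$ pops uncover $j+k+1$ exactly. After then pushing $j+k-i+1$ and the $k-i$ further zeros, exactly the $k-i+1$ indices $j+1,\ldots,j+k-i+1$ sit above $j+k+1$, so the first $k-i+1$ pops at $d_j$ again uncover $j+k+1$. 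No external bookkeeping is needed.

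Two minor remarks. First, your ``split diagonal $p_jp_{j+k-i+1}$'' degenerates to the boundary edge $p_jp_{j+1}$ when $i=k$ (and the $(k-i+2)$-gon collapses); the conclusion is unaffected, since the $(k+2)$-gon is then simply a fan from $p_{j+1}$. Second, for the end-of-$B$ case you could equally well invoke the validity inequality $\sum_{\ell=1}^{k} d_{n+1-\ell}\le k$ together with $d_{n-k+2}=\cdots=d_n=0$ to get $d_{n-k+1}\le k$; your geometric bound $d_\ell\le n+1-\ell$ gives the same conclusion.
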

The following three lemmas can be derived using Observation~\ref{obs:t_ear_code}.
The proof of Lemma~\ref{lem:extend} also shows that the extension is uniquely determined.

\begin{lem}
\label{lem:extend}
Any $k$-color valid $t$-gonal tiling $T$ on $n+2$ points can be extended by an ear at any edge $e=p_rp_{r+1}$, $1 \leq r \leq n+1$, so that the resulting $t$-gonal tiling on $n+k$ points is $k$-color valid.
\end{lem}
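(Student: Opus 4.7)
The plan is to argue entirely at the level of outdegree sequences. Because $T$ is $k$-color valid, there is a $k$-color valid triangulation $\mathcal{T}$ on $n{+}2$ points containing $T$; let $B=(b_1,\ldots,b_{2n})$ be its outdegree sequence and $(d_1,\ldots,d_n)$ its point outdegree sequence. I will exhibit an outdegree sequence $B'$ on $2(n{+}k)$ entries that (a) decodes to a triangulation $\mathcal{T}'$ which coincides with $\mathcal{T}$ outside a new $(k{+}2)$-gon placed at $e$ and contains a $t$-ear there, and (b) is $k$-color valid. Since then $\mathcal{T}'\supseteq T\cup\{\text{new ear}\}=T'$, this proves the lemma.

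Generic case $1\le r\le n$. Let $P=\sum_{j=1}^{r}(d_j+1)$ be the position in $B$ of the $0$ representing the boundary edge $e=p_rp_{r+1}$, and set $i=((P-1)\bmod k)+1$, the position of $P$ within its block. Define $B'$ by $b'_j=b_j$ for $j<P$, $b'_j=b_{j-2k}$ for $j>P+2k$, and $(b'_P,\ldots,b'_{P+2k})=W$ where $W=1^{k-i+1}\,0^{k-i+1}\,1^{i-1}\,0^{i}$ is the pattern from Observation~\ref{obs:t_ear_code} for this value of $i$. The reconstruction of Section~\ref{sec:uncolored} applied to $B'$ produces a triangulation $\mathcal{T}'$ agreeing with $\mathcal{T}$ outside the $(k{+}2)$-gon $p'_rp'_{r+1}\cdots p'_{r+k+1}$ (whose interior vertices $p'_{r+1},\ldots,p'_{r+k}$ are the freshly inserted points) and triangulating this $(k{+}2)$-gon in the unique way encoded by $W$; Observation~\ref{obs:t_ear_code_converse} then delivers a $t$-ear with ear-edge $p'_rp'_{r+k+1}$, i.e., precisely at $e$.

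By Theorem~\ref{thm:valid} it remains to check that $B'$ has valid block structure and encodes a monochromatic matching. Because $b_P=0$, the block of $B$ containing $P$ has the form $0^a1^b$ with $a\ge i$, and a short computation reveals that the three consecutive blocks of $B'$ covering the insertion read $0^{i-1}1^{k-i+1}$, $0^{k-i+1}1^{i-1}$, and $0^a1^b$, while all other blocks carry over from $B$ unchanged. For the matching I trace the stack algorithm on $W$: the first $k-i+1$ pops in the middle of $W$ produce the edges $(P+j,\,P+2k-2i+1-j)$ for $j=0,\ldots,k-i$; the next $i-1$ pops produce $(P+2k-2i+2+j,\,P+2k-1-j)$ for $j=0,\ldots,i-2$; the final pop turns the original edge $(\ell,P)$ of $B$'s matching into $(\ell,P+2k)$; and every other matching edge of $B$ carries over to $B'$ with each endpoint strictly past $P$ shifted by $2k$. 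The congruence $P\equiv i\pmod k$ makes every endpoint-sum of a new edge $\equiv 1\pmod{2k}$; combined with the mirror symmetry of the bitonic coloring (positions $j$ and $2k+1-j$ share a color) and its period $2k$, all new edges are monochromatic and all shifted edges retain their colors.

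The boundary case $r=n+1$ is covered by appending $1^k0^k$ to $B$, which is the ``$s=n+2$'' pattern of Observation~\ref{obs:t_ear_code}; block validity is immediate and the matching check is the same but shorter. Uniqueness of the extension is apparent: $i$, hence $W$, hence the triangulation of the new ear (and so $\mathcal{T}'$ and $T'$) is forced by $B$ and $r$. The main obstacle is identifying the correct pattern $W$ to insert; once in hand, block validity falls out of the computation above, and monochromaticity hinges on the coincidence $P\equiv i\pmod k$ together with the period-$2k$ mirror symmetry of the coloring.
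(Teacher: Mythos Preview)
Your proof is correct and follows essentially the same approach as the paper: both arguments work at the level of outdegree sequences, insert the pattern $W=1^{k-i+1}0^{k-i+1}1^{i-1}0^{i}$ from Observation~\ref{obs:t_ear_code} at the position of the boundary $0$ corresponding to $e$ (appending $1^k0^k$ in the case $r=n+1$), and then verify that the resulting sequence is $k$-color valid. Your write-up is in fact more explicit than the paper's, which simply asserts that ``all $k$ new edges in the matching are local within the new blocks and monochromatic,'' whereas you carry out the block decomposition and the endpoint-sum congruence check in full.
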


\begin{proof}
Let $e=p_rp_{r+1}$ be the edge where we add the ear, and let $B$ be the outdegree sequence of the $k$-colored matching corresponding to $T$.
If $r \leq n$, then in $B$, $e$ corresponds to the 0, denoted here by $0'$, between the 1s that correspond to the outdegrees  $d_r$ and $d_{r+1}$ of $p_r$ and $p_{r+1}$, respectively, or the 0s of the  preceding (subsequent) boundary edge in case $d_r$ ($d_{r+1}$) is zero.
Suppose that $0'$ be the $i$-th entry within its block $R$, for some $1 \leq i \leq k$. 
Then $R = |0^{i-1},0',m|$, where $m$ is an arbitrary but valid subsequence. 
We extend $0'$ to a $t$-ear (by inserting $k$ 1s and $k$ 0s before $0'$ according to Observation~\ref{obs:t_ear_code}, by this extending $R$ to $|0^{i-1},1^{k-i+1}|0^{k-i+1},1^{i-1}|0^{i-1},0',m|$.
If $r = n+1$, then $e$ is not represented in $B$. 
In this case, we extend $B$ by adding a block of 1s followed by a block of 0s; see again Observation~\ref{obs:t_ear_code}.
In both cases, all $k$ new edges in the matching are local within the new blocks and monochromatic. Thus it follows by Theorem~\ref{thm:valid} that the extended outdegree sequence is also color valid.
Note that once $e$ is fixed, by Observation~\ref{obs:t_ear_code} the extension is uniquely determined. 
\end{proof}

\begin{lem}
\label{lem:cutoff}
Let $\cal T$ be a $k$-color valid triangulation that contains a $t$-ear with ear-edge $p_rp_s$ for some $r \geq 1$ and $s=r+t-1 \leq n+2$. 
Then the triangulation $\cal T'$ that results from removing the $t$-ear from $\cal T$ is again $k$-color valid.
\end{lem}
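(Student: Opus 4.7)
The plan is to work at the level of outdegree sequences. Let $B$ and $B'$ denote the outdegree sequences of the $k$-colored matchings corresponding to $\cal T$ and $\cal T'$, respectively. Since $\cal T'$ is a triangulation of $n+2-k$ points in convex position, the bijection of Section~\ref{sec:uncolored} guarantees that $B'$ is a valid matching outdegree sequence, and by Theorem~\ref{thm:valid} I only need to verify that $B'$ has a valid block structure and that its matching contains no bichromatic edge. By Observation~\ref{obs:t_ear_code}, the $t$-ear corresponds in $B$ to a subsequence $W = (1^{k-i+1}|0^{k-i+1},1^{i-1}|0^i)$ (or $W=(|1^k|0^k|)$ when $s=n+2$), and $B'$ is obtained from $B$ by deleting the first $2k$ entries of $W$; in the non-terminal case these span the last $k-i+1$ positions of some block $A$, all of block $A+1$, and the first $i-1$ positions of block $A+2$. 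Since $2k$ equals both twice the block size and the period of the bitonic coloring, block boundaries and vertex colors outside the deleted range are preserved in $B'$ (in the terminal case we simply strip two trailing blocks, for which the argument is even simpler).

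For the block structure, only the merged block $A$ of $B'$ needs attention: its first $i-1$ entries come from block $A$ of $B$, and its positions $i,\ldots,k$ come from positions $i,\ldots,k$ of block $A+2$ of $B$ (with the kept $0$ at position $i$). Writing block $A$ of $B$ as $0^{a_1}1^{k-a_1}$ (so $a_1 \le i-1$) and block $A+2$ as $0^{a_3}1^{k-a_3}$ (so $a_3 \ge i$), the merged block reads $0^{a_1}1^{i-1-a_1}0^{a_3-i+1}1^{k-a_3}$, which has the required form $0^\ast 1^\ast$ precisely when $a_1=i-1$. I would prove $a_1=i-1$ by contradiction: if $a_1<i-1$, then a stack trace of the matching reconstruction through the entries of $W$ shows that the kept $0$ at position $i$ of block $A+2$ is matched with the vertex at position $i-1$ of block $A$, giving a matching edge between two consecutive colors of the bitonic palette ($c_{i-1}$ and $c_i$ if $A$ is odd, or $c_{k-i+2}$ and $c_{k-i+1}$ if $A$ is even), contradicting the $k$-color validity of $\cal T$. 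Thus $a_1=i-1$, so the merged block equals $0^{a_3}1^{k-a_3}$, which is valid.

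For the absence of bichromatic edges in $B'$, a short stack trace inside the $2k$ deleted entries shows that pushes and pops balance locally: the $k-i+1$ pushes from the initial $1$s are cancelled by the $k-i+1$ pops from the following $0$s, and the subsequent $i-1$ pushes are cancelled by the next $i-1$ pops. Therefore no matching edge of $\cal T$'s matching has one endpoint inside the deleted portion and one outside, while the edge closed by the kept $0$ joins two vertices both lying outside the deleted portion and is preserved in $B'$. Every surviving matching edge of $B'$ has its endpoints either unchanged or shifted by $2k$ positions; since $2k$ is the period of the bitonic coloring, each endpoint retains its color, and so each surviving edge remains monochromatic. Theorem~\ref{thm:valid} then yields that $\cal T'$ is $k$-color valid. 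The hard part will be the contradiction step establishing $a_1=i-1$: the stack trace must identify precisely which vertex the kept $0$ pops, and the final color comparison depends on the parity of block $A$; the remaining steps amount to bookkeeping.
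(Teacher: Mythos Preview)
Your argument is correct and shares its core with the paper's proof: the deleted word $W'=(1^{k-i+1}\,|\,0^{k-i+1},1^{i-1}\,|\,0^{i-1})$ is a self-contained matching on the $2k$ removed vertices, so every edge of $M'$ is already an edge of $M$ and hence monochromatic (colors being preserved since $2k$ is the period of the bitonic pattern). That is exactly the paper's proof, and it is also your final paragraph.

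Where you differ is in adding an unnecessary preliminary step. Your block-structure verification and the contradiction yielding $a_1=i-1$ are valid, but redundant: once your last paragraph establishes that every edge of $M'$ is monochromatic, $M'$ is a $k$-colored matching \emph{by definition}, and valid block structure then follows automatically (it was derived in Section~\ref{sec:coloredmatchings} as a consequence of monochromaticity, not as an extra hypothesis). Your appeal to Theorem~\ref{thm:valid} is therefore misplaced---that theorem characterizes $k$-colored matchings via block structure plus absence of the specific Lemma~\ref{lem:shortestedge} edge, but you never need it since you verify the defining property directly. The paper's proof is essentially your last paragraph standing alone; what you flag as ``the hard part'' (the stack trace forcing $a_1=i-1$) can simply be dropped.
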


\begin{proof}
	Let $B$ be the outdegree sequence of the $k$-colored matching $M$ corresponding to $\cal T$ and let $W$ be the subsequence of $B$ corresponding to the $t$-ear.
	In $B$, the removal of the ear is equivalent to removing $W$ from $B$ (except for the last 0 for $s<n+2$). Let $W'$ be this sequence to be removed. 
	To show that the resulting triangulation $\cal T'$ is again $k$-color valid, we need to prove that the shortened outdegree sequence $B'$ corresponds to a  
	$k$-colored matching. 
	To this end, first note that in $M$, removing $W'$ from $B$ is equivalent to removing $2k$ consecutive vertices of the point set. 
	Hence the remaining vertices with the original $k$-coloring are properly colored. 
	Second, note that the number of 0s in $W'$ is $k$ and the number of 1s in $W'$ is $k$, implying that $B'$ corresponds to some matching $M'$.
	It remains to show that $M'$ is $k$-colored, that is, that there is no bichromatic edge in $M'$. 
	By Observation~\ref{obs:t_ear_code}, we have $W'=(1^{k-i+1}|0^{k-i+1},1^{i-1}|0^{i-1})$ for some $1\!\leq\!i\!\leq\!k$. 
	In the matching $M$, this corresponds to $k$ edges that form a matching of the vertices to be removed. 
	Hence all edges in $M'$ also exist in $M$, implying that none of them is bichromatic.
\end{proof}

\begin{lem}
\label{lem:earexistence}
Let $\cal T$ be a $k$-color valid triangulation. Then $\cal T$ contains a $t$-ear with ear-edge $p_rp_s$ for some $r \geq 1$ and $s=r+t-1 \leq n+2$. 
\end{lem}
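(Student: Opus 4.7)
My plan is to work entirely with the $k$-color valid outdegree sequence $B=b_1\cdots b_{2n}$ of $\cal T$ and, via Observation~\ref{obs:t_ear_code_converse}, reduce the existence of a $t$-ear to locating a copy of the pattern $W$ inside $B$, or to verifying the end-of-$B$ special case $|1^k|0^k|$. Writing $B$ as a concatenation of $2m := 2n/k$ length-$k$ blocks, the $k$-coloring forces each block to have the form $0^{a_j}1^{k-a_j}$ for some $a_j\in\{0,\ldots,k\}$ (see Section~\ref{sec:coloredmatchings}). Two elementary boundary facts will drive everything: the ballot condition at $\ell=2k$ gives $(k-a_1)+(k-a_2)\geq k$, hence $a_1+a_2\leq k$; and since the ballot condition at $\ell=2n-1$ together with equality at $\ell=2n$ forces $b_{2n}=0$, the last block must consist entirely of zeros, i.e.\ $a_{2m}=k$.

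The central technical step is to translate the pattern $W=(1^{k-i+1}|0^{k-i+1},1^{i-1}|0^i)$ into a condition purely on $(a_1,\ldots,a_{2m})$. Aligning the two vertical bars of $W$ with actual block boundaries of $B$, the three pieces of $W$ impose on three consecutive blocks that $a_j\leq i-1$, $a_{j+1}=k-i+1$ and $a_{j+2}\geq i$. Eliminating $i$ via $i=k-a_{j+1}+1$ converts this to the clean pair of inequalities
\[
a_j + a_{j+1} \leq k, \qquad a_{j+1} + a_{j+2} \geq k+1,
\]
together with $1\leq a_{j+1}\leq k$ (which is exactly what keeps $i\in\{1,\ldots,k\}$). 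The end-of-$B$ pattern corresponds, analogously, to $a_{2m-1}=0$.

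From here the proof is a short discrete intermediate-value argument on $S_j:=a_j+a_{j+1}$. If $a_{2m-1}=0$ the end-of-$B$ case is immediate. Otherwise $S_{2m-1}=a_{2m-1}+k\geq k+1$, and combined with $S_1\leq k$ I take the largest $j^*\in\{1,\ldots,2m-2\}$ with $S_{j^*}\leq k$; by maximality $S_{j^*+1}\geq k+1$, and since $a_{j^*+2}\leq k$ this in turn forces $a_{j^*+1}\geq 1$. All three block conditions above are then satisfied at $j=j^*$, so $W$ appears in $B$ and Observation~\ref{obs:t_ear_code_converse} hands me a $t$-ear of $\cal T$. The step I expect to require the most care is the translation of $W$ into the conditions on the $a_j$: one has to be sure that the vertical bars inside $W$ genuinely line up with three consecutive block boundaries of $B$, and that the parts of $W$ lying outside the middle block respect the tail/head structure of the adjacent blocks. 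Once that bookkeeping is in place the remainder is almost automatic, and interestingly only the block form and the ballot condition are needed, so the no-bichromatic-shortcut condition of Theorem~\ref{thm:valid} enters only implicitly through the block structure.
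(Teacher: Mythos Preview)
Your argument is correct. Encoding each block by its zero-count $a_j$ and translating the pattern $W=(1^{k-i+1}\,|\,0^{k-i+1}1^{i-1}\,|\,0^{i})$ into the pair of inequalities $a_j+a_{j+1}\le k$ and $a_{j+1}+a_{j+2}\ge k+1$ (with $1\le a_{j+1}\le k$) is exactly right, and the discrete intermediate-value step on $S_j=a_j+a_{j+1}$ goes through as you describe: $S_1\le k$ by the ballot condition at $\ell=2k$, $a_{2m}=k$ forces $S_{2m-1}\ge k+1$ whenever $a_{2m-1}\ge1$, and picking the last $j^*\le 2m-2$ with $S_{j^*}\le k$ gives the required triple of blocks. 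The boundary case $a_{2m-1}=0$ indeed yields the terminal pattern $|1^k|0^k|$ (and, pleasantly, also absorbs the degenerate situation $m=1$).

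This is a genuinely different route from the paper's proof. The paper slides a window $W_i=b_i\cdots b_{i+2k}$ of length $2k+1$ across $B$, tracks its weight $w_i$ (number of $1$s), and uses $w_1>k$, $w_{2n-2k}\le k$ together with $|w_{i+1}-w_i|\le 1$ to locate a transition $w_{i-1}>k$, $w_i=k$; it then performs a case analysis (whether $W_i$ starts with a $1$ or a $0$, with further subcases) to extract a $t$-ear pattern. Your block-level reformulation exploits the valid-block structure up front, so the intermediate-value step happens on a sequence of length $2m$ rather than $2n$ and no case analysis is needed beyond the $a_{2m-1}=0$ split. Both proofs ultimately rely only on the ballot property and the block form (not on the absence of the bichromatic shortcut of Lemma~\ref{lem:shortestedge}), but yours makes this transparent and is the tidier argument. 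The paper's version, in exchange, reads more directly as a left-to-right scanning procedure, which is what they later exploit for the linear-time algorithm in the proof of Theorem~\ref{thm:bij-k}.
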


\begin{proof}
	Let $B$ be the outdegree sequence of the $k$-colored matching corresponding to $\cal T$.
	Further, let $W_i$ be the subsequence of $B$ that starts at $b_i$ and has length $2k+1$, for $1 \leq i \leq 2n-2k$, and let $w_i = \sum_{j=i}^{i+2k} b_j$ be the \emph{weight} of $W_i$.
	As $\cal T$ is $k$-color valid, we have $w_1 > k$ (there have to be at least $k+1$ outgoing edges for the first $2k+1$ vertices) and $w_{2n-2k} \leq k$ (there are at most $k$ outgoing edges for the last $2k+1$ vertices). Further, we also have $w_{i+1}-w_i \in \{0,\pm1\}$.
	We will show that either at least one of the $W_i$s or the last two blocks of $B$ represents a $k$-ear of $\cal T$. 
	To this end, we proceed through the $W_i$s from $i=1$ to $2n-2k$ as long as $w_i \geq k$. 
	Whenever $w_i > k$, we continue to the next subsequence (as a necessary condition for $W_i$ to be a $k$-ear is $w_i=k$).
	For $w_i=k$ and $w_{i-1} > k$, $W_{i-1}$ starts with $b_{i-1}=1$ and $W_i$ ends with $b_{i+2k}=0$. 
	We distinguish the following cases:\\ 
	{\bf Case 1.~} $W_i$ starts with $b_i=1$. 
		Let $1 \leq a \leq k$ be such that the block containing $b_i$ ends right before $b_{i+a}$.
		Then we have $W_i=1^a|0^a1^{k-a}|0^{k-a+1}$, where the 1s in the first block are forced by $b_i=1$, the 0s in the last block are forced by $b_{i+2k}=0$, and the form of the middle block stems from $w_i=k$.
		Hence, $W_i$ is a $k$-ear by Observation~\ref{obs:t_ear_code_converse}. \\
	{\bf Case 2.~} $W_i$ starts with $b_i=0$. As $W_{i-1}$ starts with $b_{i-1}=1$, 
		there is a block boundary directly before $b_i$, and by $w_i=k$ we have $W_i=|0^a1^{k-a}|0^{k-a}1^a|0$ for some $1 \leq a \leq k$.
		Hence, $W_j$ is no ear and $w_j \geq k$ for $i \leq j \leq \min\{i+a,2n-2k\}$.\\
		{\bf Case 2.1.~} If $i+a \leq 2n-2k$ and $w_{i+a}>k$ then $i+a < 2n-2k$ and we continue the whole process by considering $w_{i+a+1}$.\\
		{\bf Case 2.2.~} If $i+a \leq 2n-2k$ and $w_{i+a}=k$ then all entries in  $W_{i+a}\setminus W_i$ are 0s and hence $W_{i+a}=1^{k-a}|0^{k-a}1^a|0^{a+1}$ is a $k$-ear by Observation~\ref{obs:t_ear_code_converse}.\\
		{\bf Case 2.3.~} If $i+a > 2n-2k$, then all 1s in $W_i$ must also be in $W_{2n-2k}$.
			Thus $w_{2n-2k}=k$ and due to the $k$-color validity we have $W_{2n-2k}= 0|1^k|0^k|$.
			Hence the last two blocks of $B$ form a $k$-ear by Observation~\ref{obs:t_ear_code_converse}.
\end{proof} 

Combining Lemmas~\ref{lem:at_most_1_tiling}~--~\ref{lem:earexistence} and Observations~\ref{obs:ear}~--~\ref{obs:t_ear_code_converse}, we obtain our main result. 

\begin{thm}\label{thm:bij-k}
	For integers $k \geq 2$ and $c \geq 1$ let $n=ck$ and $t=k+2$.
	There exists a bijection between geometric plane perfect $k$-colored matchings on $2n$ points in convex position and $t$-gonal tilings on $n+2$ points in convex position.
	Further, for an element of one structure, the corresponding element of the other structure can be computed in linear time.
\end{thm}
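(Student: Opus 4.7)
The plan is to factor the desired bijection through the set of $k$-color valid triangulations on $n+2$ points. Sections~\ref{sec:uncolored} and~\ref{sec:coloredmatchings} already supply a linear-time bijection between $k$-colored matchings on $2n$ vertices and $k$-color valid triangulations on $n+2$ points (via outdegree sequences together with Theorem~\ref{thm:valid}), so the task reduces to exhibiting a linear-time bijection $\phi$ between $k$-color valid triangulations and $(k{+}2)$-gonal tilings on $n+2$ points.

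For the forward direction ($\phi$: triangulation to tiling), I would start with a $k$-color valid triangulation $\mathcal{T}$, apply Lemma~\ref{lem:earexistence} to obtain a $t$-ear $E_1 \subseteq \mathcal{T}$, and then use Lemma~\ref{lem:cutoff} to conclude that $\mathcal{T} \setminus E_1$ is again a $k$-color valid triangulation on $n+2-k$ points. Iterating yields a sequence of $t$-ears whose union, together with the shared ear-edges, forms a $(k{+}2)$-gonal tiling $T \subseteq \mathcal{T}$. Lemma~\ref{lem:at_most_1_tiling} guarantees $T$ is the unique such subgraph of $\mathcal{T}$, so $\phi(\mathcal{T}) := T$ is well-defined regardless of the order of extraction. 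Since Observation~\ref{obs:t_ear_code} localizes each ear to an $O(1)$-sized window of the outdegree sequence~$B$, this can be carried out in total time $O(n)$.

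For the inverse direction ($\psi$: tiling to triangulation), I would proceed by induction on $c = n/k$. In the base case $c = 1$, the tiling $T$ is a single $(k{+}2)$-gon on $k+2$ points and the required triangulation is the fan from $p_1$, which corresponds via the construction of Section~\ref{sec:uncolored} to the unique $k$-colored matching with outdegree sequence $|1^k|0^k|$. For $c > 1$, Observation~\ref{obs:ear} supplies an ear $E$ of $T$; set $T' := T \setminus E$ and $\mathcal{T}' := \psi(T')$ by induction. Applying Lemma~\ref{lem:extend} to $\mathcal{T}'$ at the boundary edge along which $E$ sits re-attaches $E$ together with the internal triangulation dictated by Observation~\ref{obs:t_ear_code}, producing a $k$-color valid triangulation $\mathcal{T}$ containing $T$; set $\psi(T) := \mathcal{T}$. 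Each step again costs $O(1)$ in $B$, so $\psi$ runs in total time $O(n)$.

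To finish, I would verify $\phi \circ \psi = \mathrm{id}$ and $\psi \circ \phi = \mathrm{id}$. The first identity is immediate: since $T \subseteq \psi(T)$ is itself a $(k{+}2)$-gonal tiling, Lemma~\ref{lem:at_most_1_tiling} forces $\phi(\psi(T)) = T$. The second identity is the main obstacle, as one must argue that peeling off a $t$-ear via $\phi$ and then putting it back via $\psi$ reproduces not only the same $(k{+}2)$-gon but also the same internal triangulation. This reduces to the uniqueness clause embedded in the proof of Lemma~\ref{lem:extend} together with Observation~\ref{obs:t_ear_code}: within a $k$-color valid outdegree sequence, the subsequence of any $t$-ear has the rigid form $(1^{k-i+1}|0^{k-i+1},1^{i-1}|0^{i})$ (or the tail form $(|1^k|0^k|)$), so both sides of the composition commit to identical local edits in $B$, and $\psi \circ \phi = \mathrm{id}$ then follows by induction on~$c$.
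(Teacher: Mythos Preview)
Your proposal is correct and follows essentially the same approach as the paper: both factor the bijection through $k$-color valid triangulations and establish it by iteratively peeling and reattaching $t$-ears via Lemmas~\ref{lem:at_most_1_tiling}--\ref{lem:earexistence} and Observations~\ref{obs:ear}--\ref{obs:t_ear_code_converse}, with uniqueness coming from Lemma~\ref{lem:at_most_1_tiling} in one direction and the rigidity in the proof of Lemma~\ref{lem:extend}/Observation~\ref{obs:t_ear_code} in the other. The only place you are terser than the paper is the $O(n)$ bound for $\phi$: the paper spells out that after removing a $t$-ear starting at $b_j$ the scan need only back up to $b_{j-2k}$ rather than restart, which is what actually yields linear total time.
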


\begin{proof}
We first show (by induction on $n$) that every $t$-gonal tiling $T$ can be completed to at least one $k$-color valid triangulation.
For $n+2=t$ the statement is trivially true as we have only one inner face and can thus triangulate as required.
So let $n+2 \geq 2t-2$. 
By Observation~\ref{obs:ear} there exists an ear $E$ of $T$. 
If we cut this ear off, then by induction there exists a completion to a $k$-color valid triangulation, which by Lemma~\ref{lem:extend} can be extended to a $k$-color valid triangulation $\cal T$ of $T$.

Next, assume that there exists a $t$-gonal tiling which can be refined by at least two different $k$-color valid triangulations.
Let $T$ be a minimal such $t$-gonal tiling and let ${\cal T}_1$ and ${\cal T}_2$ be two different $k$-color valid triangulations for $T$.
By Lemma~\ref{lem:earexistence}, ${\cal T}_1$ has a $t$-ear with ear-edge $e=p_rp_s$ for some $r \geq 1$ and $s=r+t-1 \leq n+2$. 
Thus, $e$ must be an edge of $T$, implying that ${\cal T}_2$ also has a $t$-ear at $e$. 
By Lemma~\ref{lem:cutoff}, removing the $t$-ear from ${\cal T}_1$ results in a $k$-color valid triangulation ${\cal T}'$.
Further, as $T$ is minimal, removing the $t$-ear from ${\cal T}_2$ results in the same triangulation ${\cal T}'$.
But by the proof of Lemma~\ref{lem:extend}, there is exactly one possibility of extending ${\cal T}'$ at $e$ with a $t$-ear, a contradiction.
Hence every $t$-gonal tiling $T$ can be completed to exactly one $k$-color valid triangulation.

So far we have shown that a given $t$-gonal tiling can be completed to exactly one $k$-color valid triangulation. 
For proving that there exists a bijection between $k$-colored matchings and $t$-gonal tilings, it remains to show that any $k$-color valid triangulation contains exactly one $t$-gonal tiling.

We show (by induction on $n$) that every $k$-color valid triangulation ${\cal T}$ contains at least one $t$-gonal tiling.
For $n+2=t$ the statement is trivially true, so let $n+2 \geq 2t-2$. 
By Lemma~\ref{lem:earexistence}, ${\cal T}$ has a $t$-ear with ear-edge $e=p_rp_s$ for some $r \geq 1$ and $s=r+t-1 \leq n+2$.
Further, by Lemma~\ref{lem:cutoff}, removing the $t$-ear from ${\cal T}$ results in a triangulation ${\cal T}'$, which, by induction, contains at least one $t$-gonal tiling $T'$. By Lemma~\ref{lem:extend}, we can extend $T'$ with an ear at $e$, thus obtaining a $t$-gonal tiling for ${\cal T}$. 

As by Lemma~\ref{lem:at_most_1_tiling}, every $k$-color valid triangulation ${\cal T}$ contains at most one $t$-gonal tiling $T$, this completes the proof of the existence of a bijection. 

To show that the transformation from a $k$-colored matching to a $t$-gonal tiling and vice versa can be done in linear time,
it remains to show that the $t$-gonal tiling of a $k$-color valid triangulation can be found in linear time and vice versa.

Consider first a $k$-color valid triangulation ${\cal T}$, let $B$ be the outdegree sequence of the $k$-colored matching corresponding to ${\cal T}$, and let $B$ be stored in a linked list. Let $T$ be the $t$-gonal tiling for ${\cal T}$ that we want to construct.
By the proof of Lemma~\ref{lem:earexistence}, we find a $t$-ear of ${\cal T}$ whose subsequence $W$ in $B$ starts at $b_j$ and 
which is the first $t$-ear of ${\cal T}$ in time $O(j+2k)$. 
We can remove the $t$-ear from ${\cal T}$ and $W$ (except possibly its last 0) from $B$ in constant time, by this also obtaining one diagonal of $T$.
Further, the first ear in the shortened sequence can start at earliest at $b_{j-2k}$, which implies that we do not need to restart our scan at the beginning.
Hence, we can iteratively find all diagonals of $T$ in $O(n)$ time.

For the other direction, consider a $t$-gonal tiling. We recursively cut off all ears in total linear time. 
Then, using Lemma~\ref{lem:extend}, we re-add them in reverse order,  together with their triangulations that are uniquely defined by Observation~\ref{obs:t_ear_code}.
\end{proof}

\section{Future Work}

It is natural to search for a characterization of the generators of Temperley-Lieb algebras 
in terms of triangulations (and for the generators for the $k$-colored Fuss-Catalan algebras in terms of $(k\!+\!2)$-gonal tilings). 
We plan to use our explicit bijections to study the effect of edge flips in triangulations (respectively in tilings) on the corresponding matchings 
and to find out how the actions of generators of the Temperley-Lieb algebra (the $k$-colored Fuss-Catalan algebra) can be interpreted in terms of flips in triangulations respectively in tilings. 
Preliminary results have already been obtained.
%

\paragraph{Acknowledgements.} 
Research for this work is supported by the Austrian Science Fund (FWF) grant W1230. 
We thank Paul Martin for bringing this problem to our attention.

\bibliographystyle{abbrv}
\bibliography{bibliography}

\end{document}